\documentclass{amsart}
\usepackage{amssymb}
\usepackage{amsmath}
\usepackage{amscd}
\usepackage{graphicx}
\usepackage[all]{xy}
\usepackage[T1]{fontenc}
\newtheorem{theorem}{Theorem}[section]

\newtheorem{lemma}[theorem]{Lemma}
\newtheorem{proposition}[theorem]{Proposition}
\theoremstyle{definition}

\newtheorem{subsec}[theorem]{}

\newcommand{\Br}{\operatorname{Br}}
\newcommand{\Ker}{\operatorname{Ker}}
\newcommand{\Tr}{\operatorname{Tr}}

\hyphenation{hy-po-the-sis con-ve-ni-ent fo-llo-wing}
\begin{document}

\title[Clifford extensions of blocks]{$G$-algebras, group graded algebras, \\ and Clifford extensions of blocks}
\author[Tiberiu Cocone\c t]{Tiberiu Cocone\c t \\ \textrm{Babe\c s-Bolyai University} \\ Faculty of Mathematics and Computer Science \\ Str. Mihail Kog\u alniceanu 1 \\ 400084 Cluj-Napoca, Romania\\
 e-mail: \text{tiberiu.coconet@math.ubbcluj.ro}}
\date{}

{\abstract   Let  $K$ be a normal subgroup of the finite group $H$. To a block of a $K$-interior $H$-algebra we  associate a group extension, and we
prove that this extension is isomorphic to an extension associated to a  block given  by the  Brauer homomorphism.
 This may be regarded as a generalization and an alternative treatment of Dade's results \cite[Section 12]{Dade}.    \endabstract}

\keywords{Group algebras, blocks, $G$-algebras,  Brauer construction, group graded algebras, crossed products.}

\subjclass[2010]{Primary 20C20. Secondary 16W50, 16S35}

\maketitle


\section{Introduction}

\begin{subsec} Let $p$ be a prime, and let $\mathcal{O}$ be a discrete valuation ring  with residue field $k$ of characteristic $p$. We make no
assumptions on the size of $\mathcal{O}$  and $k$, also allowing $\mathcal{O}=k.$  Let $K$ be a normal subgroup of the finite
group $H$, and we denote by $G$ the factor group $H/K$. \end{subsec}

\begin{subsec} Under the assumption that $k$ is algebraically closed, Dade  introduced in \cite{Dade}
the Clifford extension of a block $b$ of $K$, and proved that this extension (which is a central extension of $k^*$ by a certain stabilizer of $b$)
is isomorphic to an extension associated to an irreducible modular character lying in a Brauer
correspondent of $b$ (see \cite[Section 12]{Dade}). Let us briefly discuss the arguments and assumptions
used in \cite{Dade}. Two main  ingredients for the isomorphism of the Clifford extensions were \cite[Theorems 8.7 and 9.5]{Dade}.
 Note that \cite[Theorem 8.7]{Dade} is essentially Brauer's First Main Theorem, while \cite[Theorem 9.5]{Dade} extends
 the Brauer correspondence to the case of conjugacy classes of maximal ideals.  Besides these two theorems
  there are two important sets of hypotheses, namely \cite[7.1]{Dade} and \cite[10.1]{Dade}. These
   conditions assure that the equalities \cite[10.4]{Dade} hold, and give a situation which is a slightly
   more general case than of the group ring $\mathcal{O}H$.
\end{subsec}

\begin{subsec} Here we consider a unitary $K$-interior $H$-algebra $A_1$ over  $\mathcal{O}$. The $K$-interior $H$-algebra $A_1$ gives rise to an $H$-interior algebra $A$, which is strongly $G$-graded (see \cite[9.1]{Puig}, or \cite[2.1 and 2.2]{DicuMarcus} for a more general version of this construction). The $H$-interior algebra $A$ does not satisfy conditions \cite[7.1]{Dade}, and therefore the equality \cite[10.4 a)]{Dade}, \[C_A(A_1)=A^K,\]  it is also not satisfied. Still, one of the centralizers occuring in this equality is suitable for the construction of the  extensions. We  will not use a mix of two results analogous to \cite[Theorems 8.7 and 9.5]{Dade}, because for that we would need more assumptions. Instead, we consider two blocks that arise from  \cite[Theorem 3.2]{Barker}.
\end{subsec}

\begin{subsec} We start by choosing  a  primitive idempotent $b$ of $A_1^K$ which lies in the center of $A_1$. By the assumptions made on $A_1$, the idempotent $b$ is actually a block of $A_1$.  Instead of working with the centralizer $C_A(A_1)$ we consider $A^K$. As we have already mentioned, the equality $C_A(A_1)=A^K$ holds if the algebra $A$ satisfies assumptions \cite[7.1]{Dade}. In our case we only have the inclusion \[C_A(A_1)\subseteq A^K=C_A(K\cdot 1),\]  because $A_1$ is $K$-interior.  We work here with $A^K$,  and we consider, as in \cite[Paragraph 2]{Dade} the subgroup $G_b$ of $G$ fixing $b$, and its normal subgroup $G[b]$. Restricting $A^K$ to the components indexed by $G[b]$ gives a strongly $G[b]$-graded $G_b$-algebra, and this allows the construction in Section \ref{s:firs-ext} of the Clifford extension associated  to the block $b.$
\end{subsec}

\begin{subsec} Let $P\le K$ be a  defect group  of the point $\{b\}$ of $A_1^K$. The Brauer quotient $A(P)$ of
 the algebra $A$ with respect to $P$ is a $G$-graded algebra,  and its restriction $A'(P)$ to $C_H(P)K/K$ is strongly graded.
  The Brauer quotient $A_1(P)$ of $A_1$ is a $C_K(P)$-interior $C_H(P)$-algebra, and, as in \cite[2.1]{DicuMarcus} again,
   it gives rise to a $C_H(P)$-interior $C_H(P)/C_K(P)$-graded algebra $R$. We show in Section \ref{s:brauer} that $R$ and $A'(P)$ are
   isomorphic as $N_H(P)$-algebras.

By  \cite[Theorem 3.2]{Barker}, to the point $\{b \}$ of $A_1^K$ it corresponds the point $\{\bar{b}:=\Br_P(b)\}$ of $A_1(P)^{N_K(P)}.$
Since $b$ is central then so is $\bar{b}.$ Using the isomorphism stated in Proposition \ref{brauer:iso}, we regard $\bar{b}$ as a primitive
 idempotent of $R_1^{N_K(P)}$. Next, we consider the centralizer \[C_R(C_K(P))^{N_K(P)}=R^{N_K(P)}.\]
This is a $C_H(P)/C_K(P)$-graded $N_H(P)$-algebra, and we localize it by using $\bar{b}$  to the subgroup $C_H(P)_{\bar{b}}/C_K(P).$
 Further, we take the $N_K(P)$-fixed elements  and we consider the group $\overline{C_H(P)}[\bar{b}]$  for which $\bar{b}R^{N_K(P)}$ is strongly graded. We construct in Section \ref{s:second-ext} the first Clifford extension associated to $\bar{b}.$
\end{subsec}

\begin{subsec} Of course,  the algebra $A(P)$ is $C_H(P)$-interior. The centralizer
\[C_{A(P)}(C_K(P))=A(P)^{C_K(P)}\]
is too large, so one must take
\[(C_{A(P)}(C_K(P)))^{N_K(P)}=A(P)^{N_K(P)}.\]
Going back to the $G$-graded Brauer quotient we see that
 \[\bar{b}A(P)\bar{b}=\bigoplus_{\sigma\in G_b}\bar{b}A(P)_\sigma.\]
We denote $G[\bar{b}]$ the subgroup of $G_b$ such that $$E:=\bigoplus_{\sigma\in G[\bar{b}]}\bar{b}A(P)_{\sigma}^{N_K(P)}$$ is strongly graded.
Since $E_1=\bar{b}A_1(P)^{N_K(P)}$ is local, we consider the quotient
\[\bar{E}=E/EJ(E_1)\] the crossed product of $\bar{E}_1$ with $G[\bar{b}]$ corresponding to  the second Clifford extension of $\bar{b}.$
\end{subsec}

\begin{subsec}  In \cite[Section 11]{Dade} a so-called ``right kernel" of a bilinear map is  introduced.
That map exists because of the assumption that the field $k$ is algebraically closed,
 so the construction of the  Clifford extensions yields  twisted group algebras over $k$ whose components
 are one dimensional. In the present situation, we see that in general the two Clifford extensions correspond to crossed
 products over the skew fields
\[\bar{C}_1:=bA_1^K/J(bA_1^K)\] and  \[\bar{E}_1:= {\bar{b}}(A_1(P))^{N_K(P)}/J({\bar{b}}(A_1(P))^{N_K(P)})\]
extending  $k$, respectively. Clearly $\bar{C}_1$ and $\bar{E}_1$ are trivially $N_K(P)$-acted so there is no need to work with this subgroup.

Our main result is Theorem \ref{3} below, where we show that the above crossed products containing $\bar{C}_1$ and $\bar{E}_1$
as identity components are isomorphic as
group-graded algebras, and the isomorphism preserves the action of the stabilizer of $b$.
Moreover, we show that $G[b]=G[\bar{b}]$ is a subgroup in the normalizer of $P$ in $H_b$ and that these two isomorphic extensions contain
the first Clifford extension of $\bar{b}.$
\end{subsec}

\begin{subsec} At the end of the paper we deal with the special case of the group algebra. In this case, our $K$-interior $H$-algebra is  $A_1=\mathcal{O}K$, and  we have the equalities $$C_{\mathcal{O}H}(\mathcal{O}K)=(\mathcal{O}H)^K$$  and $$(C_{kC_H(D)}(kC_K(D)))^{N_K(D)}=(kC_H(D))^{N_K(D)}.$$
For this particular $K$-interior $H$-algebra, the result tells more. For both algebras $\mathcal{O}H^P$ and $\mathcal{O}N_H(P)^P$ the Brauer quotient is the same, exactly $kC_H(P).$ So considering again the Clifford extensions as before one ends up with three isomorphic crossed products acted by the same $N_H(P)_b.$ This generalizes \cite[Corollay 12.6]{Dade} (and see also \cite{Dade2} for a concise presentation) to the case of  arbitrary base field $k$. \end{subsec}

Our general assumptions and notations are standard. We refer the reader to \cite{The} and \cite{Puig} for Puig's theory of $G$-algebras and pointed groups, and to \cite{M} for facts on group graded algebras.

\section{The Clifford extension of a block}  \label{s:firs-ext}

\begin{subsec} \label{s:graded-acted} As in the introduction, let $K$ be a normal subgroup   of the finite  group $H$, and let $G=H/K$. Let   $A_1$ be an unitary $K$-interior
 $H$-algebra over the  $\mathcal{O}$. As is \cite[2.1]{DicuMarcus}, there  exists a strongly $G$-graded algebra
 $$A:=\bigoplus_{\sigma\in G}A_{\sigma}$$ with structural homomorphisms  $$\mathcal{O}H\to A$$ of $G$-graded algebras.
 This homomorphism  endows $A$ with the structure of a $G$-graded $H$-interior algebra,  and hence $A$ is also an $H$-algebra by conjugation:
 \[a_{\sigma}^h=h^{-1}\cdot a_{\sigma}\cdot h\in A _{\sigma^h},\]  where  $a_{\sigma}\in A_{\sigma}$, $\sigma\in G$,  and  $h\in H.$
 Moreover, we have that $$A_{\sigma}=A_1\otimes x \mbox{ and } A_1\simeq A_1\otimes 1,$$  for each $\sigma \in G$. Here $x$ is a  representative in $\sigma$ and the
 tensor product is over $k.$  The multiplication in $A$ is defined in \cite[2.1]{DicuMarcus}, and see also \cite[9.1]{Puig}. \end{subsec}


\begin{subsec} {\rm  Let $A_1^K$ denote the subalgebra of $A_1$ consisting of  elements fixed under the conjugation action of $K.$
The interior $H$-algebra is $K$-interior by restriction, hence in the same manner we may consider
\[C_A(K\cdot 1)=A^K,\] the fixed elements in $A$ under the conjugation action of the same group $K$.
 Because $K$ is a normal subgroup of $H$ we clearly have that  \[A^K=\bigoplus_{\sigma \in G}(A_{\sigma})^K\] is a $G$-graded subalgebra of $A$.}\end{subsec}

\begin{subsec}  Let $b$ be a primitive idempotent  of  $A_1^K$, lying in $Z(A_1)$.
 The idempotent $b$ need not necessarily be central in $C_A(K\cdot 1)$,
 so it is convenient to consider the stabilizer $G_b$ of $b$ in $G$.  In this situation, $bA^K$ becomes a $G_b$-graded $G_b$-interior algebra,
 but it is not strongly graded in general.  As in \cite{Dade}, we consider the subset
 \[G[b]=\{\sigma\in G_b\mid (bA_{\sigma})^K\cdot (bA_{\sigma^{-1}})^K=(bA_{1})^K \}.\]  of $G_b$.
 \end{subsec}

\begin{proposition}\label{proposition 2.4}
The subset $G[b]$ is a normal subgroup of $G_b.$
\end{proposition}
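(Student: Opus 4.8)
The plan is to show first that $G[b]$ contains the identity and is closed under multiplication, then to deduce the subgroup property from the finiteness of $G_b$, and finally to obtain normality by transporting the defining relation of $G[b]$ along the automorphisms of $A^K$ afforded by conjugation with the homogeneous units of $A$.

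Write $C_\rho:=(bA_\rho)^K$ for $\rho\in G_b$ and $C_1:=(bA_1)^K$. For $\rho\in G_b$ the idempotent $b$ commutes with $A_\rho$: indeed $A_\rho=A_1\hat x$ for a homogeneous unit $\hat x$ (the image in $A$ of a representative $x\in H$ of $\rho$), and $b\hat x=\hat x b$ because $b$ is central in $A_1$ and fixed by $\rho$. Hence $b$ is a two-sided identity of the $G_b$-graded algebra $\bigoplus_{\rho\in G_b}C_\rho$, one has $C_\rho C_{\rho'}\subseteq C_{\rho\rho'}$, and $C_1C_\rho=C_\rho=C_\rho C_1$ for every $\rho\in G_b$.

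Now let $\sigma,\tau\in G[b]$. The grading gives $C_{\sigma\tau}C_{(\sigma\tau)^{-1}}\subseteq C_1$, so it suffices to prove the reverse inclusion. Using $C_\sigma C_\tau\subseteq C_{\sigma\tau}$ and $C_{\tau^{-1}}C_{\sigma^{-1}}\subseteq C_{(\sigma\tau)^{-1}}$ together with associativity,
\[
C_{\sigma\tau}C_{(\sigma\tau)^{-1}}\supseteq C_\sigma C_\tau C_{\tau^{-1}}C_{\sigma^{-1}}=C_\sigma\,(C_\tau C_{\tau^{-1}})\,C_{\sigma^{-1}}=C_\sigma C_1 C_{\sigma^{-1}}=C_\sigma C_{\sigma^{-1}}=C_1,
\]
where the middle equalities use $\tau\in G[b]$ and $C_1C_{\sigma^{-1}}=C_{\sigma^{-1}}$, and the last uses $\sigma\in G[b]$. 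Thus $\sigma\tau\in G[b]$. Since $C_1C_1=C_1$ shows $1\in G[b]$ and $G_b$ is finite, each $\sigma\in G[b]$ has finite order, so $\sigma^{-1}$ is a positive power of $\sigma$ and hence lies in $G[b]$; therefore $G[b]$ is a subgroup of $G_b$.

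For normality, fix $\tau\in G_b$ and $\sigma\in G[b]$, choose $h\in H$ with $hK=\tau$, and let $\hat h\in A_\tau$ be its (invertible) image in $A$. Conjugation $c_h\colon a\mapsto\hat h^{-1}a\hat h$ is an automorphism of $A$ that shifts degrees by $A_\rho\mapsto A_{\tau^{-1}\rho\tau}$. Because $K$ is normal in $H$ we have $\hat h^{-1}\widehat{K}\hat h=\widehat{K}$, so $c_h$ preserves $A^K=C_A(\widehat K)$; moreover $c_h(b)=b^{h}=b$ since $\tau\in G_b$. Consequently $c_h$ carries $C_\rho$ onto $C_{\tau^{-1}\rho\tau}$ for every $\rho\in G_b$ and fixes $C_1$ setwise, so applying it to $C_\sigma C_{\sigma^{-1}}=C_1$ yields $C_{\tau^{-1}\sigma\tau}C_{(\tau^{-1}\sigma\tau)^{-1}}=C_1$, i.e.\ $\tau^{-1}\sigma\tau\in G[b]$. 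Hence $G[b]$ is normal in $G_b$.

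The computational heart is the nontrivial inclusion above: as $bA^K$ is not strongly graded, one cannot invoke $C_\rho C_{\rho^{-1}}=C_1$ in general, and the two hypotheses $\sigma,\tau\in G[b]$ must be used at exactly the indicated spots. The finiteness of $G_b$ is what lets us bypass a direct proof of inverse-closure, which would otherwise call for a Morita-type argument exploiting the locality of $C_1=(bA_1)^K$ (valid since $\{b\}$ is a point of $A_1^K$). For normality the key point, and the only place where $K\trianglelefteq H$ enters, is that conjugation by the homogeneous units of the crossed product $A$ stabilizes the fixed algebra $A^K$, allowing the defining relation of $G[b]$ to be carried to its $G_b$-conjugates.
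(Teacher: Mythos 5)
Your proof is correct and follows essentially the same route as the paper's: multiplicative closure comes from inserting $C_\tau C_{\tau^{-1}}=C_1$ between $C_\sigma$ and $C_{\sigma^{-1}}$, and normality from transporting the defining relation of $G[b]$ along conjugation by a representative $h$ of $\tau\in G_b$, which preserves $A^K$ because $K\trianglelefteq H$ and fixes $b$. The only differences are presentational: you spell out points the paper leaves implicit, namely that $b$ commutes with $A_\rho$ for $\rho\in G_b$, that $1\in G[b]$, and that inverse-closure follows from finiteness of $G_b$.
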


\begin{proof} For all $\sigma, \tau\in G[b]$, we have
\begin{align*}(bA_1)^K =& (bA_{\sigma})^K\cdot (bA_{\sigma^{-1}})^K=(bA_{\sigma})^K\cdot (bA_{1})^K \cdot(bA_{\sigma^{-1}})^K   \\
 =& (bA_{\sigma})^K\cdot (bA_{\tau})^K\cdot (bA_{\tau^{-1}})^K \cdot(bA_{\sigma^{-1}})\\      \subseteq &(bA_{\sigma\tau})^K\cdot (bA_{(\sigma\tau)^{-1}})^K.
 \end{align*}
This proves that $G[b]$ is a subgroup of $G_b.$
Now consider the elements $\sigma\in G[b]$ and $\theta \in G_b$.
We shall prove the equality \[((bA_{\sigma})^K)^{\theta} =(bA_{\sigma^{\theta}})^K.\]
For this, it suffices to take $h\in \theta$ and prove that \[((bA_{\sigma})^K)^{h} =(bA_{\sigma^{h}})^K.\]
Let $a_{\sigma}\in (bA_{\sigma})^{K}$, then $a_{\sigma}^h\in A_{\sigma^h}.$  For any $l\in K$
we have $$(a_{\sigma}^h)^l=l^{-1}\cdot a_{\sigma}^h\cdot l=l^{-1}h^{-1}\cdot a_{\sigma}\cdot hl=h^{-1}\cdot a_{\sigma}\cdot h.$$
Conversely, let $a_{\sigma^h}\in (bA_{\sigma^{h}})^K.$ As before, we obtain that $(a_{\sigma^h})^{h^{-1}}\in (bA_{\sigma^{}})^K$,
and by applying the action of $h$, the desired inclusion follows. We have
\[((bA_{\sigma})^K)^{\theta}\cdot ((bA_{\sigma^{-1}})^K)^{\theta}=((bA_1)^K)^{\theta},\] or equivalently,
\[(bA_{\sigma^{\theta}})^K\cdot (bA_{(\sigma^{\theta})^{-1}})^K=(bA_1)^K, \] which proves the statement.
\end{proof}

\begin{subsec} \label{s:c} We now denote \[C:=\bigoplus_{\sigma\in G[b]}b(A_{\sigma})^K=\bigoplus_{\sigma\in G[b]}C_{\sigma}\]
where for each $\sigma\in G[b]$, we have denoted \[C_{\sigma}:=b(A_{\sigma})^K.\] The proposition above implies that $C$ is
a strongly $G[b]$-graded $G_{b}$-algebra. Its identity component \[C_1=bA_1^{K}\] is a local ring,
 so by \cite[Lemma 1.1]{Sch} it follows that $C$ is a crossed product of $C_1$ with $G[b]$.
\end{subsec}

\begin{subsec} \label{s:barC}
We have  the skew field  $$\bar{C}_1:= C_1/J(C_1)$$ whose center $\hat{k}_1=Z(\bar{C}_1)$ is a finite extension of $k$.
 Consequently,  $$\bar{C}:=C/CJ(C_1)$$ is a crossed product, and  in the same time it is a $G_b$-algebra. For any $\sigma\in G[b]$
 we identify $$\bar{C}_{\sigma}=C_{\sigma}/C_{\sigma}J(C_1).$$  The corresponding group extension
 \begin{align*}\tag{1}1\to \bar{C}_1^{*}\to \operatorname{hU}(\bar{C})\to G[b]\to 1\end{align*}
 is the {\it Clifford extension} of the block $b$. Here $\operatorname{hU}$ denotes the group of homogenous units.  \end{subsec}

\section{The Brauer Quotient}  \label{s:brauer}

By the Brauer quotient of a $H$-algebra with respect to a $p$-subgroup $P$ of $H$ we mean the structure of an $N_H(P)$-algebra as presented in  \cite[\S 11]{The}. In general, there is no natural graded structure of the Brauer quotient, with respect to an arbitrary $p$-subgroup of $H$, on our strongly $G$-graded $H$-interior algebra $A$. Therefore, we consider the situation when $P$ is a $p$-subgroup of $K$.

\begin{subsec} Let $P$ be a $p$-subgroup of $K$. Since the restriction to $K$ of the action of $H$ on the algebra $A$  leaves invariant each homogeneous component of $A$, we have $$A^P=\bigoplus_{\sigma\in G}A_{\sigma}^P.$$
For any subgroup $Q$ in $P$ we have  the equality $A_Q^P\cap A_{\sigma}^P=(A_{\sigma})_Q^P.$ Using this equality, the Brauer quotient of $A$  becomes
\begin{align*}A(P) & = k\otimes_{\mathcal{O}} (A^P/\sum_{Q<P}A_Q^P) \\  &= k\otimes_{\mathcal{O}} (\bigoplus_{\sigma\in G}(A_{\sigma})^P)/(\bigoplus_{\sigma\in G}\sum_{Q<P}(A_{\sigma})_Q^P) \\ &\simeq  k\otimes_{\mathcal{O}}\bigoplus_{\sigma\in G}
((A_{\sigma})^P/\sum_{Q<P}(A_{\sigma})_Q^P)= \bigoplus_{\sigma\in G}A(P)_{\sigma.}\end{align*}
In this situation,  the $N_H(P)$-algebra $A(P)$ is $G$-graded, with identity component $A(P)_1\simeq A_1(P).$
\end{subsec}

\begin{subsec}  It is useful to consider the  subalgebra
 $$A'(P)=\bigoplus_{\sigma\in C_H(P)K/K}A(P)_{\sigma},$$ which is an $N_H(P)$-invariant subalgebra of $A(P).$ With the notation of \ref{s:graded-acted}, the  map $a\mapsto a\otimes 1$ is an isomorphism of $H$-algebras between   $A_1$ and $A_1\otimes 1.$ This implies that \[A^P_1\simeq (A_1\otimes 1)^P=A_1^P\otimes 1.\] If $\sigma\in C_H(P)K/K$ and $a\in A_1^P$ and $x\in \sigma\cap C_H(P)$, we can identify $a$ with
$$a\otimes 1=(a\otimes x)(1\otimes x^{-1})\in A_{\sigma}^P\cdot A_{\sigma^{-1}}^P.$$ Then  the equality $A_1^P=A_{\sigma}^P\cdot A_{\sigma^{-1}}^P$
is valid, showing that $A'(P)$ is a strongly $C_H(P)K/K$-graded $N_H(P)$-algebra. In fact, this also follows from the proof of Proposition \ref{brauer:iso}
below.
\end{subsec}

\begin{subsec} \label{s:r} We may also construct  the Brauer quotient $A_1(P)$  of the  the $K$-interior $H$-algebra
 $A_1$, so $A_1(P)$ is a $C_K(P)$-interior $N_H(P)$-algebra. We then regard  $A_1(P)$ to be a $C_K(P)$-interior $C_H(P)$-algebra. In this case, following
 \cite[2.1]{DicuMarcus} we obtain the strongly $C_H(P)/C_K(P)$-graded $N_H(P)$-interior $k$-algebra
 \[R:=\bigoplus_{\tau\in C_H(P)/C_K(P)}R_{\tau}.\] For each $\tau\in C_H(P)/C_K(P)$ we have  \[R_{\tau}=A_1(P)\otimes x\] for some representative $x$ in $\tau$.
 \end{subsec}

\begin{proposition}\label{brauer:iso} The strongly graded $k$-algebras $A'(P)$ and $R$ are isomorphic as $N_H(P)$-algebras.
 \end{proposition}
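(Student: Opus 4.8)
The plan is to build an explicit graded $k$-algebra isomorphism $\Phi\colon R\to A'(P)$ and then to check that it intertwines the $N_H(P)$-actions. First I would record the identification of the two grading groups: by the second isomorphism theorem the canonical map $C_H(P)/C_K(P)\to C_H(P)K/K$, $xC_K(P)\mapsto xK$, is an isomorphism, so the homogeneous components of $R$ and of $A'(P)$ are indexed by the same group, and for every $\sigma\in C_H(P)K/K$ I may choose a representative $x\in\sigma\cap C_H(P)$. This choice is the whole point of passing from $A(P)$ to $A'(P)$: for $\sigma\notin C_H(P)K/K$ no representative centralizes $P$ and the computation below breaks down.

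The computational core is the description of the $P$-fixed points and relative traces of a component $A_\sigma$ whose representative $x$ centralizes $P$. Writing $A_\sigma=A_1\hat x$ (with $\hat x$ the image of $x$ under the structural map), for $u\in P$ one has $\hat x^{-1}\hat u\hat x=\hat u$ because $x\in C_H(P)$; hence conjugation by $u$ sends $a\hat x$ to $a^u\hat x$, which gives $A_\sigma^Q=A_1^Q\hat x$ for every $Q\le P$ and, since the relative trace only involves conjugation by coset representatives of $Q$ in $P$, $(A_\sigma)_Q^P=(A_1)_Q^P\,\hat x$. Passing to the Brauer quotient I obtain a $k$-linear isomorphism $A(P)_\sigma\cong A_1(P)$, $\overline{a\hat x}\mapsto\bar a$, matching $R_\tau=A_1(P)\otimes x$. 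Taking $a=1$ in the identity $(a\hat x)\big((a')^{x^{-1}}\hat x^{-1}\big)=a\,(a')^{x^{-1}}$ also yields $A_\sigma^P A_{\sigma^{-1}}^P=A_1^P$, which reproves that $A'(P)$ is strongly graded.

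With this in hand I would define $\Phi\colon R\to A'(P)$ on generators by $\bar a\otimes x\mapsto\overline{a\hat x}$, where $a\in A_1^P$ is any lift of $\bar a\in A_1(P)$ and $x\in C_H(P)$. Well-definedness has two parts: independence of the lift $a$ follows from $(A_1)_Q^P\hat x=(A_\sigma)_Q^P$, and compatibility with the relation $\bar a\bar c\otimes x=\bar a\otimes cx$ for $c\in C_K(P)$ follows from $a\hat c\,\hat x=a\,\widehat{cx}$ in $A$. Multiplicativity is where I expect the real work: using the product rule $(a\hat x)(a'\hat{x'})=a\,(a')^{x^{-1}}\,\widehat{xx'}$ of the graded algebra of \cite{DicuMarcus}, I must identify $\hat x a'\hat x^{-1}=(a')^{x^{-1}}$ with the image in $A_1(P)$ of the $C_H(P)$-conjugation action used to build $R$; this is precisely the naturality of the Brauer construction with respect to elements normalizing $P$ (cf. \cite{The}), and it is the point that must be handled carefully, since it is what forces the twisted multiplications on the two sides to agree. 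The unit is preserved because $\Phi(\bar 1\otimes 1)=\overline{\hat 1}$, and the component-wise bijectivity established above shows $\Phi$ is a graded isomorphism.

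Finally I would verify $N_H(P)$-equivariance. An element $n\in N_H(P)$ normalizes both $C_H(P)$ and $C_K(P)$, acts on $A_1(P)$ through the Brauer construction, and acts on $R$ through these data, while on $A'(P)$ it acts by conjugation inside $A(P)$. On a generator, ${}^n(\bar a\otimes x)=({}^n\bar a)\otimes nxn^{-1}$, whereas ${}^n\overline{a\hat x}=\overline{({}^n a)\,\widehat{nxn^{-1}}}$, and these agree because the Brauer quotient is $N_H(P)$-equivariant and $nxn^{-1}\in C_H(P)$. Hence $\Phi$ is an isomorphism of $N_H(P)$-algebras. The main obstacle throughout is the multiplicativity check, namely confirming that the conjugation twist built into the product of $A'(P)$ corresponds under the Brauer functor to the $C_H(P)$-interior structure defining $R$; once the component identification $A_\sigma^P=A_1^P\hat x$ is in place, all remaining verifications are routine bookkeeping.
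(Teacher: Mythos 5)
Your proposal is correct and follows essentially the same route as the paper: you construct exactly the same map $\bar a\otimes x\mapsto \overline{a\otimes x}$ on components indexed via $C_H(P)/C_K(P)\simeq C_H(P)K/K$, with the same well-definedness argument resting on $\Tr_Q^P(c)\otimes x=\Tr_Q^P(c\otimes x)$ for $x\in C_H(P)$. The only divergence is at the end, where you verify multiplicativity and componentwise bijectivity by hand (via $(A_\sigma)_Q^P=(A_1)_Q^P\,\hat x$), while the paper checks only that the identity component is an isomorphism and then concludes by citing \cite[Proposition 2.12]{Dade1} for graded homomorphisms of strongly graded algebras.
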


 \begin{proof} There is an obvious bijection between the sets of components of the two algebras, because of the natural isomorphism
 \[C_H(P)K/K\simeq C_H(P)/C_K(P).\]
 Next, we fix $\tau \in C_H(P)/C_K(P)$ and $\sigma\in C_H(P)K/K$ such that $\tau=\sigma\cap C_H(P)$, and we define
 \[\psi_{\tau}:R_{\tau}\to A'(P)_{\sigma}, \qquad \bar{a}\otimes x \mapsto \overline{a\otimes x},\]   for  $x\in \tau$.
 We have denoted  $\bar{a}:=\Br_P(a)$ for some element $a$ in $A_1^P$, while $x$ stands for a representative in $\tau$.
 If $a_1\in \bar{a}$ then $a_1-a\in \Ker(\Br_P)$ which means  $a_1-a=\Tr_Q^P(c)$ for some $c\in A_1^Q.$ Then
 \[(a_1-a)\otimes x=\Tr_Q^P(c)\otimes x=\Tr_Q^P(c\otimes x)\in \sum_{Q<P}(A_{\sigma})_Q^P,\]
 since $x\in C_H(P)$. So $\psi_{\tau}$ is a well-defined map.
 Moreover, the direct sum of these maps gives the graded homomorphism  \[\psi:R\to A'(P),\]   whose identity component is an isomorphism.
 Indeed, $A'(P)_1=A_1(P)$ and $R_1=A_1(P)\otimes 1\simeq A_1(P)$, and one can easily prove that $\psi_1$ is both injective and surjective.
The statement follows by applying \cite[Proposition 2.12]{Dade1}.
 \end{proof}

\section{The second extension} \label{s:second-ext}

\begin{subsec} Assume that the subgroup $P$ of $K$  is a defect group of $b$. According to \cite[\S 18]{The}, this means $b\in (A_1)_P^K$ and
$\Br_P(b)\neq 0.$ Since $b$ central and  primitive in $A_1^K$,  it is actually a point of $K$ on $A_1$  with defect group $P$ (see \cite[\S 3]{The}).
Applying  \cite[Theorem 3.2]{Barker}, the element $\bar{b}=\Br_P(b)$  lies in the center of $A_1(P)^{N_K(P)}$ and forms a singleton,
hence a point of $A_1(P)^{N_K(P)}$,
 also with defect group $P$. The identity component $R_1$ of the algebra $R$  constructed in \ref{s:r}  is isomorphic to $A_1(P).$ Using this isomorphism,
we regard  $\bar{b}$ as an element of $R_1$, so $\bar{b}$ is a point of $R_1^{N_K(P)}$ with defect group $P$.
\end{subsec}

\begin{subsec} \label{s:S} Let $C_H(P)_{\bar{b}}$ denote the stabilizer of $\bar{b}$ in $C_H(P)$, and let
 \[S:=\bigoplus_{\tau\in C_H(P)_{\bar{b}}/C_K(P)}R_{\tau}.\] Moreover $\bar{b}$ is a central element of $S$ and one can check the equality
 \[\bar{b}R\bar{b}:=\bigoplus_{\tau\in C_H(P)_{\bar{b}}/C_K(P)}\bar{b}R_{\tau}\]
 Then $\bar{b}R\bar{b}$ is     $C_H(P)/C_K(P)$-graded $N_H(P)_{\bar{b}}$-invariant algebra.
\end{subsec}

\begin{subsec}\label{f:g:d} Since $N_K(P)$ is a normal subgroup of $N_H(P)_{\bar{b}}$, we may consider the  centralizer
 \[C_S(C_K(P))^{N_K(P)}\cdot 1)=S^{N_K(P)}.\] The normal subgroup $N_K(P)$ of $N_H(P)_{\bar{b}}$ centralizes $C_H(P)_{\bar{b}}/C_K(P)$, and
   $S^{N_K(P)}$ is a  $N_H(P)_{\bar{b}}$-algebra in which $\bar{b}$ is  central. Then
 \[S^{N_K(P)}=\bigoplus_{ \tau \in C_H(P)_{\bar{b}}/C_K(P)}(R_{\tau})^{N_K(P)}, \mbox{ and }\]
 \[\bar{b}S^{N_K(P)}=(\bar{b}S)^{N_K(P)}=(\bar{b}R\bar{b})^{N_K(P)}=\bigoplus_{ \tau \in C_H(P)_{\bar{b}}/C_K(P)}\bar{b}(R_{\tau})^{N_K(P)}.\]
 Of course we need
 \[\overline{C_H(P)}[\bar{b}]=\{\tau\in C_H(P)_{\bar{b}}/C_K(P)\mid \bar{b}(R_{\tau})^{N_K(P)}\cdot \bar{b}(R_{\tau^{-1}})^{N_K(P)}=\bar{b}(R_{1})^{N_K(P)}\},\]
 the normal subgroup of $C_H(P)_{\bar{b}}/C_K(P)$ that makes $\bar{b}S^{N_K(P)}$ strongly graded.
 Denote
\begin{align*} D &:= \bigoplus_{\tau \in \overline{C_H(P)}[\bar{b}]}\bar{b}(R_{\tau})^{N_K(P)} \\
 &=\bigoplus_{\tau \in \overline{C_H(P)}[\bar{b}]}D_{\tau},\end{align*}
  where for each $\tau\in \overline{C_H(P)}[\bar{b}]$ we have \[D_{\tau}=\bar{b}(R_{\tau})^{N_K(P)}.\]
By construction, it follows that   $D_{\tau} D_{\tau^{-1}}= D_1$ for all $\tau\in \overline{C_H(P)}[\bar{b}]$.
 This makes $D$ a $\overline{C_H(P)}[\bar{b}]$-strongly graded $N_H(P)_{\bar{b}}$-algebra.
 By the localness of $D_1$ and by \cite[Lemma 1.1]{Sch} the order  $D$ is a crossed product of $D_1$ with
 $\overline{C_H(P)}[\bar{b}].$  \end{subsec}

\begin{subsec}\label{c:p:d} Denote \[\Bar{D}_1:= D_1/J(D_1)\] and $$\bar{D}=D/DJ(D_1).$$ Since the identity component
$D_1=\bar{b}R_1^{N_K(P)}$ of $D$ is a local ring, we obtain  $\bar{D}_1$  a skew field whose center $\hat{k}_2=Z(\bar{D}_1)$
is a finite extension of $k$. Moreover, $\bar{D}$ is an $N_H(P)_{\bar{b}}$-algebra, and in the same time a crossed product
of $\bar{D}_1$ with ${  \overline{C_H(P)}[\bar{b}]}$, corresponding to the group extension
\begin{align*}\tag{2}1\to \bar{D}_1^{*}\to \operatorname{hU}(\bar{D})\to \overline{C_H(P)}[\bar{b}]\to 1.\end{align*}
As usual, for $\tau\in \overline{C_H(P)}[\bar{b}]$ we identify $$\bar{D}_{\tau}=D_{\tau}/D_{\tau}J(D_1).$$
\end{subsec}

\section{The third extension}\label{s:third-ext}
If, as above, $N_H(D)_b$ denotes the stabilizer of $b$ in $N_H(P)$ and  $N_H(D)_{\bar{b}}$ denotes the stabilizer of $\bar{b}$ in $N_H(P),$ one can easily show that these two stabilizers are equal.

\begin{subsec}  We return to our $G$-graded $H$-interior algebra $$A=\bigoplus_{\sigma\in G}A_{\sigma},$$
 and to its $H_b$-subalgebra
\[bA:=\bigoplus_{\sigma\in G_b}bA_{\sigma}.\] Since the defect group $P$ fixes $b$ we have $(bA)^P=bA^P$ and we may consider the Brauer homomorphism
in the following situation.
\[\Br_P:(bA)^P\to (bA)(P).\] Of course this is a morphism of $N_H(P)_b$-algebras.
Note that we have the direct sums decompositions
\[(bA)^P=\bigoplus_{\sigma\in G_b}(bA_{\sigma})^P\] and  \[(bA)(P)=\bigoplus_{\sigma\in G_b}((bA_{\sigma})^P/\sum_{Q<P}(bA_{\sigma})^P_Q):=
\bigoplus_{\sigma\in G_b}(bA)(P)_{\sigma}.\] This is actually the same situation as in Section \ref{s:brauer}, so we do not give any further explanation on
the structure of these algebras.
\end{subsec}

\begin{subsec}
We are going to relate this Brauer quotient to the construction made in Section \ref{s:second-ext}. For that we need to make one replacement.
Recall that $\bar{b}$ denotes the image of $b$ under the Brauer morphism evaluated on the identity components. We see that for any $\sigma \in G_b$ there is an
isomorphism of $N_H(P)_b$-invariant $k$-spaces, that is
\[(bA_{\sigma})^P/\sum_{Q<P}(bA_{\sigma})^P_Q\simeq \bar{b}((A_{\sigma})^P/\sum_{Q<P}(A_{\sigma})^P_Q).\] Thus we can reconsider the Brauer morphism in this way:
\[\Br_P:bA^P\to \bar{b}A''(P)=\bar{b}A(P)\bar{b},\] where
\[A''(P)=\bigoplus_{\sigma\in G_b}A(P)_{\sigma}.\]
\end{subsec}

\begin{subsec}
Next we are interested in the subgroups of $G_b$ for which $bA^P$ and $\bar{b}A''(P)$ become strongly graded. So we consider
\[T_1=\{\sigma\in G_b\mid bA_{\sigma}^P\cdot bA_{\sigma^{-1}}^P=bA_{1}^P\}\]  and
\[ T_2=\{\sigma\in G_b\mid \bar{b}A''(P)_{\sigma}\cdot \bar{b}A''(P)_{\sigma^{-1}}=\bar{b}A(P)_{1}\}.\] Since we are concerned
mainly with subalgebras that are strongly graded, as we will see, all the subgroups of $G_b$ that appear in
our further constructions are included in $T_1$ or in $T_2.$ An obvious remark is that the Brauer morphism carries the surjection
componentwise, and this implies that the restriction
\[\bigoplus_{\sigma\in T_1}(bA_{\sigma})^P \to \bigoplus_{\sigma\in T_2}\bar{b}A(P)_{\sigma}\] is well defined. Indeed, if $\sigma\in T_1$
then we have
$$\bar{b}A(P)_1=\Br_P(bA_1^P)=\Br_P(bA_{\sigma}^P\cdot bA_{\sigma^{-1}}^P)=\bar{b}A''(P)_{\sigma}\cdot \bar{b}A''(P)_{\sigma^{-1}}.$$ This
restriction could be surjective provided that the idempotent $b$ remains primitive in $A_1^P,$
which in general it is not true.

Following the proof of Proposition \ref{proposition 2.4} one can show that $T_1$ and $T_2$
are both  respectively $N_H(P)_b$-invariant and  $N_H(P)_{\bar{b}}$-invariant  subgroups of $G_b.$
But since $N_H(P)_b=N_H(P)_{\bar{b}}$ and $T_1\leq T_2$ this restriction is a morphism of $N_H(P)_b$-algebras.
 \end{subsec}

\begin{subsec}\label{s:ext3}
At the beginning of Section \ref{s:second-ext} we saw that $\bar{b}$ is a primitive central idempotent of $A(P)_1^{N_K(P)}\simeq A_1(P)^{N_K(P)},$
hence $\bar{b}$ is a central idempotent of  \[\bar{b}A''(P)^{N_K(P)}=\bigoplus_{\sigma\in G_b}\bar{b}A(P)_{\sigma}^{N_K(P)}\] and of
\[(\bigoplus_{\sigma\in T_2}\bar{b}A(P)_{\sigma})^{N_K(P)}=\bigoplus_{\sigma\in T_2}\bar{b}A(P)_{\sigma}^{N_K(P)}.\]
In both cases we can introduce the subgroups
\[G[\bar{b}]=\{\sigma\in G_b\mid \bar{b}A(P)_{\sigma}^{N_K(P)}\cdot \bar{b}A(P)_{\sigma^{-1}}^{N_K(P)}=\bar{b}A(P)_{1}^{N_K(P)}\}\]  and
\[T_2[\bar{b}]=\{\sigma\in T_2\mid \bar{b}A(P)_{\sigma}^{N_K(P)}\cdot \bar{b}A(P)_{\sigma^{-1}}^{N_K(P)}=\bar{b}A(P)_{1}^{N_K(P)}\}\] of $G_b$, determining
two strongly graded subalgebras. As expected, these  subgroups coincide and are $N_H(P)_b$-invariant.

The last statement follows by using the localness of $\bar{b}A(P)_{1}^{N_K(P)},$ the inclusions
\[(\bigoplus_{\sigma\in T_2}\bar{b}A(P)_{\sigma})^{N_K(P)}\hookrightarrow \bar{b}A''(P)^{N_K(P)}\hookrightarrow \bar{b}A''(P)\] and \cite[Lemma 1.1]{Sch}. Then for any
$\sigma\in T_2[\bar{b}]$ there is a unit $\bar{a}_{\sigma}\in \bar{b}A(P)_{\sigma}^{N_K(P)}\cap \mathrm{U}(\bar{b}A''(P)^{N_K(P)})$ such that
$$\bar{b}A(P)_{\sigma}^{N_K(P)}=\bar{a}_{\sigma}\bar{b}A(P)_{1}^{N_K(P)}=\bar{b}A(P)_{1}^{N_K(P)}\bar{a}_{\sigma}.$$ So $T_2[\bar{b}]$ is a subgroup of
$G[\bar{b}].$ Now let $\sigma\in G[\bar{b}].$ A similar argument as before gives a unit
$\bar{a}_{\sigma}\in \bar{b}A(P)_{\sigma}^{N_K(P)}\cap \mathrm{U}(\bar{b}A''(P)^{N_K(P)})$
such that \[\bar{b}A(P)_{\sigma}^{N_K(P)}=\bar{a}_{\sigma}\bar{b}A(P)_{1}^{N_K(P)}=\bar{b}A(P)_{1}^{N_K(P)}\bar{a}_{\sigma},\] as a component of
$\bar{b}A''(P)^{N_K(P)}.$ Using one of the above inclusion
 it follows that $\bar{a}_{\sigma}$ is an invertible element in the biggest algebra. This implies that $\sigma$ belongs to $T_2.$
Keeping in mind that $\bar{a}_{\sigma}$ is still an $N_K(P)$-invariant invertible element, we have $\sigma\in T_2[\bar{b}].$ The $N_H(P)_b$-invariance
follows again using the technique of the proof of Proposition \ref{proposition 2.4}.
\end{subsec}

\begin{subsec}
In the previous paragraph we saw that regardless the starting point, we end up with the same subgroup that makes our $N_K(P)$-invariant
subalgebra of $\bar{b}A(P)''$ strongly graded. Repeating the construction of Section \ref{s:second-ext} we obtain the algebra
\[E:=\bigoplus_{\sigma\in G[\bar{b}]}\bar{b}A(P)_{\sigma}^{N_K(P)}:=\bigoplus_{\sigma\in G[\bar{b}]}E_{\sigma},\] which is a crossed product of $E_1:=\bar{b}A(P)_{1}^{N_K(P)}$ with $G[\bar{b}]$ such that
$\bar{E_1}:=E_1/J(E_1)$ is a skew field whose center $\hat{k}_3=Z(\bar{E_1})$ is a finite extension of $k.$ The quotient
$\bar{E}:=E/EJ(E_1)$ is a $N_H(P)_b$-algebra, a crossed product of $\bar{E_1}$ with $G[\bar{b}]$ corresponding to the Clifford extension
\begin{align*}\tag{3}1\to \bar{E}_1^{*}\to \operatorname{hU}(\bar{E})\to G[\bar{b}]\to 1.\end{align*}
We should note  that $E$ does not contain any zero components, meaning that for any $\sigma\in G[\bar{b}]$ we have $\bar{b}A(P)_{\sigma}^{N_K(P)}\neq 0.$
\end{subsec}

\section{Remarks on the first extension}

We  go back to our first extension, since we have introduced the group $T_1$, and we need to relate it to $G[b].$
\begin{subsec}
First, let us observe that $G[b]$ is a subgroup of $T_1.$ This follows from the fact that $C$ is a crossed product (see \ref{s:c}) and because of
the inclusions
\[C=\bigoplus_{\sigma\in G[b]}b(A_{\sigma})^K\hookrightarrow  \bigoplus_{\sigma\in G_b}b(A_{\sigma})^K\hookrightarrow \bigoplus_{\sigma\in G_b}b(A_{\sigma})^P.\]
\end{subsec}

\begin{subsec}
Conversely, as in the previous section, since
\[\bigoplus_{\sigma\in T_1}b(A_{\sigma})^K\hookrightarrow \bigoplus_{\sigma\in T_1}b(A_{\sigma})^P,\] we have $T_1[b]=G[b].$
\end{subsec}

\begin{subsec}

If $\sigma\in G[b]$ since $bA_1^K=bA_{\sigma}^K\cdot bA_{\sigma^{-1}}^K$ then $bA_{\sigma}^K\neq 0$ and $bA_{\sigma}^K\nsubseteq \sum_{Q<P}(bA_{\sigma})^P_Q.$ This easily follows from the definition of $G[b]$ and from the proof of Lemma \ref{ext-iso} below. This remark forces the corresponding component in the Brauer quotient to satisfy
$\bar{b}A(P)_{\sigma}^{N_K(P)}\neq 0.$
\end{subsec}
\section{The isomorphism of  two  extensions}

We keep the notations of the previous sections. The next  lemma will be needed.
\begin{lemma}\label{ext-iso} The skew fields $\bar{C}_1$ and $\bar{E}_1$ are isomorphic as $N_H(P)_b$-algebras.
\end{lemma}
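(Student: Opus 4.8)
The plan is to realize the isomorphism through the Brauer homomorphism itself. Since $b\in A_1^K\subseteq A_1^P$ and $\Br_P$ is equivariant for the conjugation action of $N_H(P)$, the restriction of $\Br_P$ to $A_1^K$ takes values in $A_1(P)^{N_K(P)}$; as $\Br_P(b)=\bar b$ it restricts further to a homomorphism of $\mathcal{O}$-algebras
\[\psi:=\Br_P|_{C_1}\colon C_1=bA_1^K\longrightarrow \bar b\, A_1(P)^{N_K(P)}=E_1,\qquad \psi(b)=\bar b=1_{E_1}.\]
Because $\Br_P$ is $N_H(P)$-equivariant and, as recorded at the beginning of Section \ref{s:third-ext}, $N_H(P)_b=N_H(P)_{\bar b}$, the map $\psi$ is a homomorphism of $N_H(P)_b$-algebras. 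I would first reduce the statement to the claim that $\psi$ is surjective. Indeed, $C_1$ and $E_1$ are local rings and $\psi(1)=1$ with $E_1\neq 0$, so $\Ker\psi$ is a proper two-sided ideal and hence $\Ker\psi\subseteq J(C_1)$; a surjective $\psi$ therefore identifies $E_1$ with $C_1/\Ker\psi$, forces $J(E_1)=\psi(J(C_1))$, and induces a surjection $\bar\psi\colon\bar C_1\to\bar E_1$ of skew fields. Since any nonzero homomorphism of skew fields is injective, $\bar\psi$ is the desired isomorphism, automatically $N_H(P)_b$-equivariant.

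It remains to prove that $\psi$ is onto, and here I would exploit the defect groups on both sides. On the target, $\bar b$ is a point of $N_K(P)$ on $A_1(P)$ with defect group $P$, and $P$ is \emph{normal} in $N_K(P)$; writing $\bar b=\Tr_P^{N_K(P)}(u_0)$ and pulling $N_K(P)$-fixed elements into the trace, every $z\in E_1=\bar b\,A_1(P)^{N_K(P)}\bar b$ can be written as $z=\Tr_P^{N_K(P)}(w_0)$ with $w_0:=\bar b u_0 z\bar b\in \bar b\,A_1(P)\bar b$. On the source, $b$ has defect group $P$ as a point of $K$, so from $b=\Tr_P^K(c)$ with $c\in bA_1^Pb$ the same manipulation gives $C_1=bA_1^K=\Tr_P^K(bA_1^Pb)$. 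As $\Br_P\colon bA_1^Pb\to \bar b\,A_1(P)\bar b$ is surjective, each $w_0$ lifts to $\tilde w\in bA_1^Pb$ with $\Br_P(\tilde w)=w_0$, and the candidate preimage of $z$ is $x:=\Tr_P^K(\tilde w)$, which lies in $bA_1^K=C_1$ because $b$ is central. One is then reduced to checking that $\Br_P(x)=z$.

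The main obstacle is precisely this last verification, i.e. comparing the two relative traces through $\Br_P$. Using transitivity $\Tr_P^K=\Tr_{N_K(P)}^K\circ\Tr_P^{N_K(P)}$ and $P\trianglelefteq N_K(P)$, the inner trace behaves well: each conjugate ${}^{n}\tilde w$ with $n\in N_K(P)$ still lies in $A_1^P$, so $\Br_P$ applies termwise and $\Br_P\bigl(\Tr_P^{N_K(P)}(\tilde w)\bigr)=\Tr_P^{N_K(P)}(w_0)=z$. Setting $y:=\Tr_P^{N_K(P)}(\tilde w)\in A_1^{N_K(P)}$, the claim thus becomes $\Br_P\bigl(\Tr_{N_K(P)}^K(y)\bigr)=\Br_P(y)$. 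This outer identity is false in general (conjugates ${}^{x}y$ with $x\notin N_K(P)$ but $P\le N_K({}^{x}P)$ may survive, and index factors intervene), and it is exactly the content of Brauer's First Main Theorem. I would therefore not prove it by a bare trace computation but invoke \cite[Theorem 3.2]{Barker}, which supplies the bijective Brauer correspondence $\{b\}\leftrightarrow\{\bar b\}$ between points with defect group $P$, together with the standard fact from Puig's theory (see \cite{The}) that this correspondence is induced by an embedding of the associated pointed groups and hence preserves the multiplicity modules. The residue skew field of a point is the endomorphism division ring of its multiplicity module, so this identification forces $\bar\psi$ to be surjective and completes the proof, the $N_H(P)_b$-action being respected at every stage since it is inherited from $\Br_P$.
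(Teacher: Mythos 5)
Your overall strategy is the same as the paper's: restrict $\Br_P$ to $C_1=bA_1^K$, show it maps onto $E_1=\bar{b}A_1(P)^{N_K(P)}$, and then pass to the residue skew fields (your local-ring reduction, $\Ker\psi\subseteq J(C_1)$ plus surjectivity forcing an isomorphism $\bar\psi$, is an acceptable substitute for the paper's appeal to \cite[Proposition 3.23]{Puig}). The genuine gap is at the surjectivity step. Your setup is correct: write $z=\Tr_P^{N_K(P)}(w_0)$, lift $w_0$ to $\tilde w\in bA_1^Pb$, and put $x=\Tr_P^K(\tilde w)$. But the verification $\Br_P(x)=z$, which you declare impossible by a ``bare trace computation,'' is precisely the elementary Mackey identity
\[\Br_P(\Tr_P^K(a))=\sum_{l\in N_K(P)/P}(\Br_P(a))^l=\Tr_P^{N_K(P)}(\Br_P(a)),\qquad a\in A_1^P,\]
valid because $\Br_P$ annihilates $\Tr_{P\cap P^l}^P$ whenever $P\cap P^l<P$, so the only surviving double cosets $PlP$ are those with $l\in N_K(P)$. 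Applied to $a=\tilde w$ it gives $\Br_P(x)=\Tr_P^{N_K(P)}(w_0)=z$ at once; this is exactly the computation in the paper's proof. Your obstacle is self-inflicted: by factoring $\Tr_P^K=\Tr_{N_K(P)}^K\circ\Tr_P^{N_K(P)}$ you replaced the identity above by the statement $\Br_P(\Tr_{N_K(P)}^K(y))=\Br_P(y)$ for an arbitrary $N_K(P)$-fixed $y$, which indeed fails in general --- but the $y$ at hand is itself a trace from $P$, and for such $y$ the needed equality follows from the displayed formula. No form of Brauer's First Main Theorem is required.

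The fallback you substitute does not close this gap. \cite[Theorem 3.2]{Barker} gives the bijection between the point $\{b\}$ of $A_1^K$ and the point $\{\bar b\}$ of $A_1(P)^{N_K(P)}$ with the same defect group; the paper already uses it for exactly that purpose in Section \ref{s:second-ext}. The further assertion that this correspondence ``preserves the multiplicity modules,'' and that this ``forces $\bar\psi$ to be surjective,'' is not a citation of an actual statement: the identification of the residue skew fields of $b$ and $\bar b$ as $N_H(P)_b$-algebras is essentially the content of Lemma \ref{ext-iso} itself, and the standard proof of any such multiplicity statement is precisely the surjectivity of $\Br_P$ on $b(A_1)_P^K$ that you are trying to avoid; as written, the decisive step is circular. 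Moreover, even granting an abstract isomorphism of multiplicity modules, it would not show that your particular map $\bar\psi$ is surjective, nor that the resulting isomorphism is $N_H(P)_b$-equivariant. The repair is short: keep your reduction, and prove surjectivity as the paper does --- the displayed identity gives $\Br_P((A_1)_P^K)=A_1(P)_P^{N_K(P)}$, hence $b(A_1)_P^K$ maps onto the ideal $\bar{b}A_1(P)_P^{N_K(P)}$ of $E_1$, which contains the identity $\bar b$ of $E_1$ because $\bar b$ has defect group $P$; an ideal containing the identity is everything, so $\psi$ is onto.
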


\begin{proof}
Recall that \[\bar{C}_1= C_1/J(C_1)=bA_1^K/J(bA_1^K)\] and
 \[\bar{E}_1= E_1/J(E_1)= {\bar{b}}(A_1(P))^{N_K(P)}/J({\bar{b}}(A_1(P))^{N_K(P)}).\]
  We have  $P$  a defect group of the point $\{b\} \subset A_1^K$, and $b\in (A_1)_P^K$.
  Arguments similar to those of   \cite[Lemma 3.4]{Rha},
  together with the proof of   \cite[Lemma 1.12]{BrouePuig},
  show that the map \[\Br_P:bA_1^K\to (\Br_P(b)A_1(P))^{N_K(P)}=\bar{b}(A_1(P))^{N_K(P)}\] is onto.
  Notice that $$\Br_P((A_1)_P^K)=A_1(P)_P^{N_K(P)}.$$ This is true since for $\Tr_P^K(a)\in (A_1)_P^K,$
  by using the Mackey decomposition, we get
 $$\Br_P(\Tr_P^K(a))=\Br_P(\sum_{l\in P\setminus K/P}\Tr_{P\cap P^l}^P(a^l))=\sum_{l\in N_K(P)/P}(\Br_P(a))^l.$$
  The idempotent $b$ belongs to $(A_1)_P^K$, and the ideal $b(A_1)_P^K$ is mapped onto the ideal $\bar{b}A_1(P)_P^{N_K(P)},$
 which  contains the identity $\bar{b}$ of $\bar{b}A_1(P)^{N_K(P)}$. To finish the proof, it suffices to apply  \cite[Proposition 3.23]{Puig}.
\end{proof}

Before stating the main result we introduce one more subgroup of $T_2.$
\begin{subsec}\label{lastsubgroupT_2}
The $N_H(P)_b$-subalgebra
\[\bigoplus_{\sigma\in N_H(P)_bK/K}\bar{b}A(P)_{\sigma}^{N_K(P)}\] need not be a strongly graded subalgebra of $\bar{b}A''(P).$ So we introduce the $N_H(P)_{\bar{b}}$-invariant subgroup
\[\overline{N_H(P)}[\bar{b}]=\{\sigma\in N_H(P)_bK/K\mid \bar{b}A(P)_{\sigma}^{N_K(P)}\cdot \bar{b}A(P)_{\sigma^{-1}}^{N_K(P)}=\bar{b}A(P)_{1}^{N_K(P)}\}\] of $N_H(P)_bK/K$.
The inclusions
\[\bigoplus_{\sigma\in \overline{N_H(P)}[\bar{b}]}\bar{b}A(P)_{\sigma}^{N_K(P)}\hookrightarrow \bigoplus_{\sigma\in N_H(P)_bK/K}\bar{b}A(P)_{\sigma}^{N_K(P)}
\hookrightarrow \bar{b}A''(P),\]
show that $\overline{N_H(P)}[\bar{b}]$ is a subgroup of $T_2.$
\end{subsec}

\begin{theorem} \label{3}  The following statements hold.

\begin{enumerate}
\item[$(1)$] $G_b$ equals $N_H(P)_{b}K/K=N_H(P)_{\bar{b}}K/K$.
\item[$(2)$] The groups $G[b]$ and $G[\bar{b}]$ are equal and they both coincide with $\overline{N_H(P)_b}[\bar{b}]$.
\item[$(3)$] The extensions $(1)$ and $(3)$ are isomorphic.
\item[$(4)$] The isomorphism between the extensions $(1)$ and $(3)$ is compatible with the identity isomorphism
\[G[b]\to \overline{N_H(P)}[\bar{b}], \] and preserves the action  of
$$G_b\simeq N_H(P)_{\bar{b}}/N_K(P)$$ on the two extensions.
\item[$(5)$] There is a  monomorphism from extension $(2)$ into extension $(3)$, which is also compatible with the natural monomorphism
$$\overline{C_H(P)}[\bar{b}]\to G[b],$$ and preserves the action of $G_b$ on these extensions.
\end{enumerate}
\end{theorem}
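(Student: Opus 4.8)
The plan is to establish the five claims in order, the whole argument resting on one componentwise strengthening of Lemma \ref{ext-iso}, which I will call the \emph{graded Brauer surjectivity}: for every $\sigma\in G_b$ the Brauer homomorphism restricts to a surjection
\[\Br_P\colon b A_\sigma^K \longrightarrow \bar b A(P)_\sigma^{N_K(P)}=E_\sigma.\]
For $(1)$ I would run a Frattini argument. Writing $H_b$ for the stabiliser of $b$ in $H$, one has $K\le H_b$ and $G_b=H_b/K$. Since the defect groups of the point $\{b\}$ of $A_1^K$ form a single $K$-conjugacy class and $b$ is $H_b$-fixed, for $h\in H_b$ the subgroup $P^h$ is again a defect group of $\{b\}$, so $P^{hl}=P$ for some $l\in K$; then $hl\in N_H(P)_b$ and $h\in N_H(P)_bK$. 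Hence $H_b=N_H(P)_bK$ and $G_b=N_H(P)_bK/K$, which together with $N_H(P)_b=N_H(P)_{\bar b}$ (noted at the start of Section \ref{s:third-ext}) gives $(1)$. I also record $N_H(P)_b\cap K=N_K(P)$, so that $G_b\cong N_H(P)_{\bar b}/N_K(P)$, a fact needed for the equivariance statements.

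To prove the graded Brauer surjectivity I would imitate the proof of Lemma \ref{ext-iso} in degree $\sigma$. Because $K$ acts trivially on $G$, each component $A_\sigma$ is $K$-stable, and the Mackey computation of that lemma gives $\Br_P\circ\Tr_P^K=\Tr_P^{N_K(P)}\circ\Br_P$ on $A_\sigma^P$. Writing $b=\Tr_P^K(c)$ with $c\in A_1^P$ and $\bar c=\Br_P(c)$, a direct check shows $b(A_\sigma)_P^K\subseteq bA_\sigma^K$ and $\Br_P\big(b(A_\sigma)_P^K\big)=\bar b\,(A(P)_\sigma)_P^{N_K(P)}$; since $\bar b=\Tr_P^{N_K(P)}(\bar c)$ and $N_K(P)$-fixed elements may be pulled inside the relative trace, one gets $\bar b\,(A(P)_\sigma)_P^{N_K(P)}=\bar b A(P)_\sigma^{N_K(P)}=E_\sigma$, giving the surjection. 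With this in hand, $(2)$ is quick. The inclusion $G[b]\subseteq G[\bar b]$ follows by applying $\Br_P$ to the defining relation $(bA_\sigma)^K(bA_{\sigma^{-1}})^K=(bA_1)^K$. Conversely, $C_\sigma C_{\sigma^{-1}}$ is a two-sided ideal of the local ring $C_1=bA_1^K$, hence equals $C_1$ or lies in $J(C_1)$; if it lay in $J(C_1)$ then, by the surjectivity in each degree, $E_\sigma E_{\sigma^{-1}}=\Br_P(C_\sigma C_{\sigma^{-1}})\subseteq\Br_P(J(C_1))=J(E_1)$ (the last equality since $\Ker(\Br_P|_{C_1})$ is a proper ideal of the local ring $C_1$, hence contained in $J(C_1)$), contradicting $\sigma\in G[\bar b]$. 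Thus $G[b]=G[\bar b]$, and $\overline{N_H(P)}[\bar b]=G[\bar b]$ because by $(1)$ it is carved out of the same group $N_H(P)_bK/K=G_b$ by the same strong-grading condition.

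For $(3)$ and $(4)$ I would assemble these surjections into a single $N_H(P)_b$-equivariant graded algebra map $\Phi=\Br_P\colon C\to E$ over the common grading group $G[b]=G[\bar b]$, whose identity component is the isomorphism $\bar C_1\simeq\bar E_1$ of Lemma \ref{ext-iso}. Passing to $\bar C=C/CJ(C_1)$ and $\bar E=E/EJ(E_1)$ and invoking \cite[Proposition 2.12]{Dade1}, exactly as in Proposition \ref{brauer:iso}, yields an isomorphism $\bar\Phi\colon\bar C\to\bar E$ of $G[b]$-graded algebras. It sends homogeneous units to homogeneous units and $\bar C_1^{*}$ onto $\bar E_1^{*}$, hence induces an isomorphism of the extensions $(1)$ and $(3)$; since $\bar\Phi$ is graded for the identity of $G[b]=G[\bar b]=\overline{N_H(P)}[\bar b]$, the map on bottom groups is the identity, which is the first half of $(4)$. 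The remaining equivariance is the $N_H(P)_b$-equivariance of $\Br_P$ transported through $G_b\cong N_H(P)_{\bar b}/N_K(P)$, using that $N_K(P)$ acts trivially both on $C=bA^K$ and on $E$.

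For $(5)$ I would use the $N_H(P)$-algebra isomorphism $\psi\colon R\to A'(P)$ of Proposition \ref{brauer:iso}. As $\psi$ fixes $\bar b$ and is $N_K(P)$-equivariant, it identifies $D_\tau=\bar b R_\tau^{N_K(P)}$ with $\bar b A(P)_\sigma^{N_K(P)}=E_\sigma$, where $\sigma$ is the image of $\tau$ under $C_H(P)/C_K(P)\simeq C_H(P)K/K\hookrightarrow G$. This turns the defining relation of $\overline{C_H(P)}[\bar b]$ into that of $G[\bar b]$, so $\overline{C_H(P)}[\bar b]\hookrightarrow G[\bar b]=G[b]$ and $\psi$ realises $D$ as a graded subalgebra of $E$; since $D_1\simeq E_1$, passing to radical quotients gives a graded monomorphism $\bar D\hookrightarrow\bar E$, hence the desired monomorphism of extension $(2)$ into extension $(3)$, compatible with the natural inclusion of bottom groups and $G_b$-equivariant by the $N_H(P)$-equivariance of $\psi$. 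The one genuinely technical point, and the step I expect to be the main obstacle, is the graded Brauer surjectivity of the first paragraph; once it is secured, the rest is bookkeeping with local rings, crossed products, and \cite[Proposition 2.12]{Dade1}.
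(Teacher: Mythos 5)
Your proposal is correct and follows essentially the same route as the paper's proof: a Frattini argument for $(1)$; surjectivity of $\Br_P\colon C\to E$ (your ``graded Brauer surjectivity'', which the paper obtains globally from the fact that $bA_P^K$ is an ideal of $bA^K$ containing the identity $b$, together with $\Br_P\circ\Tr_P^K=\Tr_P^{N_K(P)}\circ\Br_P$ --- equivalent to your degree-by-degree version since the map is graded); the local-ring dichotomy plus $\Br_P(J(C_1))\subseteq J(E_1)$ for $G[\bar b]\le G[b]$; descent to the radical quotients and the degree-one isomorphism of Lemma \ref{ext-iso} for $(3)$ and $(4)$; and the isomorphism $\psi$ of Proposition \ref{brauer:iso} for $(5)$. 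The only cosmetic slip is calling the identity component of $\Phi\colon C\to E$ ``the isomorphism of Lemma \ref{ext-iso}'' (it is only surjective; the isomorphism appears after passing to $\bar C_1$ and $\bar E_1$), but your subsequent invocation of \cite[Proposition 2.12]{Dade1} uses it correctly.
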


\begin{proof} We know that the Brauer morphism is compatible with the $N_H(P)$-action.  We also know that $\bar{b}=\Br_P(b)$ and $N_H(P)_b=N_H(P)_{\bar{b}}.$
Denote by $H_b$ the inverse image of $G_b$ in $H$. Then $H_b$ normalizes $K$, and since all defect groups of $b$
are conjugate under $K$, $H_b$ acts on the set of defect groups of $b$.  We have  $b\in (A_1)_P^K$ and $\Br_P(b)\neq 0.$ Then
if $h\in H_b$, we obtain $b\in (A_1)_{P^h}^K$, and clearly $\Br_{P^h}(b)\neq 0$, because otherwise $b\in \sum_{Q<P^h}(A_1)^{P^h}_Q$,
 implying $\Br_P(b)=0.$ So for any $h\in H_b$ there is $l\in K$ such that $P^h=P^l$, and from this we obtain $hl^{-1}\in N_{H_b}(P)=N_H(P)_b.$
 The  inclusion $N_H(P)_bK\subseteq H_b$ is trivial, proving the equality \[H_b=N_H(P)_bK=N_H(P)_{\bar{b}}K.\] This  is our  first statement.

We claim that the restriction of the Brauer homomorphism
\[(bA)^K=\bigoplus_{\sigma\in G_b}(bA_{\sigma})^K\to \bar{b}A''(P)^{N_K(P)}=\bigoplus_{\sigma\in G_b}\bar{b}A(P)_{\sigma}^{N_K(P)} \]
is an epimorphism. Indeed, because $P$ is a defect group of $b$, this block belongs to the ideal $(A_1)_P^K\subseteq  A_P^K.$ The equality
\[\Br_P(bA_P^K)=\bar{b}A''(P)_P^{N_K(P)}\]  is well known.
The sets $bA_P^K$ and $\bar{b}A''(P)_P^{N_K(P)}$ are ideals, one in $bA^K$ and the other in  $\bar{b}A''(P)^{N_K(P)},$
both containing the identity element of the respective algebra, so the claim is proved.

If $\sigma\in G[b]$ then $bA_1^K=bA_{\sigma}^K\cdot bA_{\sigma^{-1}}^K.$ Using the proof of Lemma \ref{ext-iso} we obtain
\begin{align*}\bar{b}A(P)_1^{N_K(P)}&=\Br_P(bA_1^K)=\Br_P(bA_{\sigma}^K)\Br_P(bA_{\sigma^{-1}}^K) \\ &=\bar{b}A(P)_{\sigma}^{N_K(P)}\bar{b}A(P)_{\sigma^{-1}}^{N_K(P)}
=E_{\sigma}\cdot E_{\sigma^{-1}}, \end{align*}
so the image of  $C$ is in $E$, and $G[b]\leq G[\bar{b}].$

If $\sigma\in G[\bar{b}]$ we have $$E_{\sigma}\cdot E_{\sigma^{-1}}=E_1.$$  Then
$$\Br_P^{-1}(E_{\sigma})\cdot \Br_P^{-1}(E_{\sigma^{-1}})\nsubseteq J(C_1),$$ because otherwise
$$E_{\sigma}\cdot E_{\sigma^{-1}}=\Br_P(\Br_P^{-1}(E_{\sigma})\cdot \Br_P^{-1}(E_{\sigma^{-1}}))\subseteq \Br_P(J(C_1))\subseteq J(E_1)$$ which is false.
We have
$$\Br_P^{-1}(E_{\sigma})\cdot \Br_P^{-1}(E_{\sigma^{-1}})=bA^K_{\sigma}\cdot bA^K_{\sigma^{-1}}\subseteq C_1,$$  and we obtain
$$J(C_1)+ bA^K_{\sigma}\cdot bA^K_{\sigma^{-1}}=C_1.$$ Consequently, $\sigma \in G[b]$, and this proves the inclusion $G[\bar{b}]\leq G[b].$ Together with
this inclusion we have shown that the restriction
\[\Br_P:C\to E\] is also an epimorphism of $N_H(P)_b$-algebras.

We have already used the inclusion $\Br_P(J(C_1))\subseteq J(E_1)$, which is a well-known result on $\mathcal{O}$-algebras
related by an epimorphism.
  Using the fact that $C$ and $E$ are strongly graded algebras, the results in \cite[1.5.A.]{M} prove
$$\Br_P(CJ(C_1))\subseteq EJ(E_1).$$
In this way we get a new $N_H(P)_b$-algebra epimorphisms,  namely
$$\overline{\Br_P}:\bar{C}\to \bar{E}, \mbox{ for } \bar{a}\in \bar{C} \mbox{ we have } \overline{\Br_P}(\bar{a})=\overline{\Br_P(a)}.$$
Statement $(3)$ follows from this $G[b]$-graded epimorphism, from the equality $G[b]=G[\bar{b}]$ and Lemma \ref{ext-iso}.

Using $(1)$ the inclusion
\[\bigoplus_{\sigma\in N_H(P)_bK/K}\bar{b}A(P)_{\sigma}\hookrightarrow \bar{b}A''(P)=\bigoplus_{\sigma\in G_b}\bar{b}A(P)_{\sigma}\]
is now an equality. Forcing
\[\bigoplus_{\sigma\in N_H(P)_bK/K}\bar{b}A(P)_{\sigma}^{N_K(P)}\hookrightarrow \bar{b}A''(P)^{N_K(P)}=\bigoplus_{\sigma\in G_b}\bar{b}A(P)_{\sigma}^{N_K(P)}\]
to be an equality. The definitions of $G[\bar{b}]$ and $ \overline{N_H(P)}[\bar{b}],$ see \ref{s:ext3} and \ref{lastsubgroupT_2}, prove the equality of
these two groups. This shows the last part of $(2).$

By the construction of the first extension $G[b]$ is a normal subgroup of $G_b$, so by using assertions $(1)$ and $(2),$ $G[\bar{b}]$ is a normal subgroup
of $N_H(P)_bK/K.$ Since the Brauer map is a morphism of $N_H(P)_b$-algebras, statement (4) of the theorem is immediate.

By Proposition \ref{brauer:iso}, the $N_H(P)$-algebras
\[R:=\bigoplus_{\tau\in C_H(P)/C_K(P)}R_{\tau}\]  and
\[A'(P)=\bigoplus_{\sigma\in C_H(P)K/K}A(P)_{\sigma}\]
are isomorphic as $N_H(P)$-algebras. So
$R^{N_K(P)} \mbox{ and } A'(P)^{N_K(P)}$
are isomorphic as $N_H(P)$-algebras too.
This implies that
\[(\bar{b}R\bar{b})^{N_K(P)}=\bar{b}S^{N_K(P)}\overset{\psi}{\simeq} \bar{b}A'(P)^{N_K(P)}\]  and
\[\bar{b}A'(P)^{N_K(P)}=\bigoplus_{\sigma\in C_H(P)_{\bar{b}}K/K}\bar{b}A'(P)^{N_K(P)} _{\sigma}.\]
Using the isomorphism $\psi,$ the group $\overline{C_H(P)}[\bar{b}]$ is isomorphic to its analogous  subgroup of
  $C_H(P)_{\bar{b}}K/K.$
Indeed, if $\overline{C_H(P)}[\bar{b}]'$ denotes the subgroup of $C_H(P)_{\bar{b}}K/K$ isomorphic to $\overline{C_H(P)}[\bar{b}]$
under \[C_H(P)_{\bar{b}}/C_K(P)\simeq C_H(P)_{\bar{b}}K/K\] then  $\overline{C_H(P)}[\bar{b}]'$ is the largest subgroup for which the
restriction of  $\bar{b}A'(P)^{N_K(P)}$ is
strongly graded.
If $\tau\in \overline{C_H(P)}[\bar{b}]$ then
\begin{align*}\bar{b}A(P)_1^{N_K(P)}=\psi(\bar{b}R_1^{N_K(P)})&=\psi(\bar{b}R_{\tau}^{N_K(P)})\cdot \psi(\bar{b}R_{\tau^{-1}}^{N_K(P)})\\
&=\bar{b}A(P)_{\sigma}^{N_K(P)}\cdot
\bar{b}A(P)_{\sigma^{-1}}^{N_K(P)},\end{align*}
where $\sigma\cap C_H(P)_{\bar{b}}=\tau$ and $\sigma$ belongs to  $\overline{C_H(P)}[\bar{b}]'.$
Conversely, if $\sigma\in \overline{C_H(P)}[\bar{b}]',$ as before
\[\psi^{-1}(\bar{b}A(P)_{\sigma}^{N_K(P)})\cdot \psi^{-1}(\bar{b}A(P)_{\sigma^{-1}}^{N_K(P)})\nsubseteq J(\bar{b}R_1^{N_K(P)}),\]
since $\bar{b}R_1^{N_K(P)}$ is local and $\psi(J(\bar{b}R_1^{N_K(P)}))=J(\bar{b}A_1(P)^{N_K(P)}).$
Hence for the corresponding $\tau=\sigma\cap C_H(P)_{\bar{b}}$ we have
\[\bar{b}R_1^{N_K(P)}=\bar{b}R_{\tau}^{N_K(P)}\cdot \bar{b}R_{\tau^{-1}}^{N_K(P)}.\]
By the definition of the action of $N_H(P)_b$ on these algebras, the groups $\overline{C_H(P)}[\bar{b}]'$ and $\overline{C_H(P)}[\bar{b}]$
are both $N_H(P)_b$-invariant hence normal subgroups of $N_H(P)_bK/K$ and of $N_H(P)_b/C_K(P)$ respectively. So $\overline{C_H(P)}[\bar{b}]$ embeds into
$G[b]$ and then $D$ embeds in $E$. The equalities
\[J(D)=DJ(D_1), J(E)=EJ(E_1) \mbox{ and } J(E_1)=J(D_1)\] define a map
$\bar{D}\to \bar{E}$ such that all the vertical maps in the
 following commutative diagram
$$
\begin{xy}
\xymatrix{
 1 \ar[r]  &\bar{D}_1^{*}\ar[r]\ar[d] &\operatorname{hU}(\bar{D})\ar[r]\ar[d] &\overline{C_H(P)}[\bar{b}]\ar[r]\ar[d] &1\\
 1\ar[r] &\bar{E}_1^{*}\ar[r] &\operatorname{hU}(\bar{E})\ar[r] &G[\bar{b}]\ar[r] &1\\
}
\end{xy}
$$
are injective.
\end{proof}

\section{The group algebra case} \label{s:groupalg}

\begin{subsec}
The main theorem shows that the group $G[b]$ defining the Clifford extension equals $\overline{N_H(P)}_b[\bar{b}]$ and it is at least $\overline{C_H(P)}[\bar{b}].$
This situation is generated by the groups $T_1$ and $T_2$. They are the first that strongly graduate the Brauer domain and codomain, and they include all the others
subgroups of the algebras we work with.
Actually $G[b]$ equals  $\overline{C_H(P)}[\bar{b}]$ exactly when  $\overline{C_H(P)}[\bar{b}]=\overline{N_H(P)}[\bar{b}]$.

There are some cases of $K$-interior $H$-algebras for which $T_2$ is included or it equals the centralizer. This is the case of a group algebra.
Let $G$ denote the quotient $H/K,$ let $A=\mathcal{O}H$ and let $b$ be a block of $\mathcal{O}K$ having defect group $P\leq K.$
The special case of the group algebra gives
\[A=\bigoplus_{\sigma\in G}A_{\sigma}, \mbox{ where } A_{\sigma}=\mathcal{O}\sigma, \mbox{ for all } \sigma\in G\]
and $b$ is primitive in $Z(\mathcal{O}K)=\mathcal{O}K^K.$ For each $\sigma\in G$ we denote by
  $C_{\sigma}$ the intersection $b(\mathcal{O}\sigma\cap A^K).$
As above we introduce $G[b]$ and $G_b,$ and then
\[bAb=\bigoplus_{\sigma\in G_b}bA_{\sigma}=b\mathcal{O}H_b,\] while
\[C=\bigoplus_{\sigma\in G[b]}bC_{\sigma}\] is a strongly $G[b]$-graded $H_b$-algebra. Letting $\hat{k}_1=C_1/J(C_1)$, the quotient
\[C/CJ(C_1)=\bigoplus_{\sigma\in G[b]}C_{\sigma}/C_{\sigma}J(C_1)\] is the crossed product of $\hat{k}_1$ with $G[b]$ that corresponds
to the Clifford extension
\begin{align*}\tag{1'}1\to \hat{k}_1^{*}\to \operatorname{hU}(\bar{C})\to G[b]\to 1.\end{align*}
\end{subsec}
\begin{subsec}
If $b_1$ is the Brauer correspondent of $b$, it also has defect $P$ and it lies in $$Z(\mathcal{O}N_K(P))=\mathcal{O}N_K(P)^{N_K(P)}.$$
The $N_K(P)$-interior $N_H(P)$-algebra $\mathcal{O}N_K(P)$ is the identity component of
\[\mathcal{O}N_H(P)=\bigoplus_{\sigma\in N_H(P)/N_K(P)}\mathcal{O}\sigma.\]
Moreover
\[b_1\mathcal{O}N_H(P)b_1=b_1\mathcal{O}N_H(P)_{b_1}=\bigoplus_{\sigma\in N_H(P)_{b_1}/N_K(P)}b_1\mathcal{O}\sigma.\]
As in the above construction, we have the  normal subgroup $G[b_1]$ of $N_H(P)_{b_1}/N_K(P)$ determining a strongly graded $N_H(P)_{b_1}$-subalgebra
of \[b_1\mathcal{O}N_H(P)_{b_1}^{N_K(P)}=\bigoplus_{\sigma\in N_H(P)_{b_1}/N_K(P)}b_1(\mathcal{O}\sigma)^{N_K(P)};\]
more precisely,
\[E=\bigoplus_{\sigma\in G[b_1]}E_{\sigma}=\bigoplus_{\sigma\in G[b_1]}b_1(\mathcal{O}\sigma)^{N_K(P)}.\]
Then letting $\hat{k}_2=E_1/J(E_1)$, the quotient $\bar{E}=E/EJ(E_1)$ is the crossed product of $\hat{k}_2$ with $G[b_1]$ associated to the extension
\begin{align*}\tag{2'}1\to \hat{k}_2^{*}\to \operatorname{hU}(\bar{E})\to G[b_1]\to 1.\end{align*}
\end{subsec}

\begin{subsec}
Let us take a look of the Brauer quotients of these two algebras. We have
\[\Br^H_P:(\mathcal{O}H)^P\to (\mathcal{O}H)^P/\sum_{Q<P}(\mathcal{O}H)_Q^P\simeq kC_H(P)\] and
\[\Br^{N_H(P)}_P:(\mathcal{O}N_H(P))^P\to (\mathcal{O}N_H(P))^P/\sum_{Q<P}(\mathcal{O}N_H(P))_Q^P\simeq kC_H(P).\]
Then we have the maps
\[\Br^H_P:(b\mathcal{O}H_b)^K\to  \bar{b}kC_H(P)_{\bar{b}}^{N_K(P)}\]  and
\[\Br^{N_H(P)}_P:(b_1\mathcal{O}N_H(P)_{b_1})^{N_K(P)}\to \bar{b}_1kC_H(P)_{\bar{b}_1}^{N_K(P)},\]
where $\bar{b}:=\Br^H_P(b),$  $\bar{b}_1:=\Br^{N_H(P)}_P(b_1),$ $N_H(P)_b=N_H(P)_{\bar{b}}$ and $N_H(P)_{b_1}=N_H(P)_{\bar{b}_1}.$ Since
$\bar{b}=\bar{b}_1$ (see  the proof of \cite[Theorem 5.1]{AschKessOliv}), the $N_H(P)_b$-algebra
$$D:=\bar{b}kC_H(P)_{\bar{b}}^{N_K(P)}$$ determines a unique extension
\begin{align*}\tag{3'}1\to \hat{k}_3^{*}\to \operatorname{hU}(\bar{D})\to \overline{C_H(P)}[\bar{b}]\to 1.\end{align*}  By
applying twice the main result, once for each of the Brauer maps defined above, we get that the extensions $(1')$, $(2')$ and $(3')$ are  isomorphic. Note that in the case of the group algebra we have
\[(b\mathcal{O}H_b)(P)=(b\mathcal{O}N_H(P)_bK)(P)=(b_1\mathcal{O}N_H(P)_{b_1})(P)=\bar{b}kC_H(P)_{\bar{b}},\] and this is why
$G[b]=G[\bar{b}]=G[b_1]=\overline{N_H(P)_b}[\bar{b}]=\overline{C_H(P)_b}[\bar{b}].$
\end{subsec}

\end{document}